\font\black=cmbx10 \font\sblack=cmbx7 \font\ssblack=cmbx5 \font\blackital=cmmib10  \skewchar\blackital='177
\font\sblackital=cmmib7 \skewchar\sblackital='177 \font\ssblackital=cmmib5 \skewchar\ssblackital='177
\font\sanss=cmss10 \font\ssanss=cmss8 
\font\sssanss=cmss8 scaled 600 \font\blackboard=msbm10 \font\sblackboard=msbm7 \font\ssblackboard=msbm5
\font\caligr=eusm10 \font\scaligr=eusm7 \font\sscaligr=eusm5  \font\fraktur=eufm10
\font\sfraktur=eufm7 \font\ssfraktur=eufm5
\font\bsymb=cmsy10 scaled\magstep2
\def\all#1{\setbox0=\hbox{\lower1.5pt\hbox{\bsymb
       \char"38}}\setbox1=\hbox{$_{#1}$} \box0\lower2pt\box1\;}
\def\exi#1{\setbox0=\hbox{\lower1.5pt\hbox{\bsymb \char"39}}
       \setbox1=\hbox{$_{#1}$} \box0\lower2pt\box1\;}
\def\tx#1{{\fam0\relax#1}}
\def\sss#1{{\fam\ssfam\relax#1}}
\def\hpb#1{\setbox0=\hbox{${#1}$}
    \copy0 \kern-\wd0 \kern.2pt \box0}
\def\vpb#1{\setbox0=\hbox{${#1}$}
    \copy0 \kern-\wd0 \raise.08pt \box0}
\def\pmb#1{\setbox0\hbox{${#1}$} \copy0 \kern-\wd0 \kern.2pt \box0}
\def\pmbb#1{\setbox0\hbox{${#1}$} \copy0 \kern-\wd0
      \kern.2pt \copy0 \kern-\wd0 \kern.2pt \box0}
\def\pmbbb#1{\setbox0\hbox{${#1}$} \copy0 \kern-\wd0
      \kern.2pt \copy0 \kern-\wd0 \kern.2pt
    \copy0 \kern-\wd0 \kern.2pt \box0}
\def\pmxb#1{\setbox0\hbox{${#1}$} \copy0 \kern-\wd0
      \kern.2pt \copy0 \kern-\wd0 \kern.2pt
      \copy0 \kern-\wd0 \kern.2pt \copy0 \kern-\wd0 \kern.2pt \box0}
\def\pmxbb#1{\setbox0\hbox{${#1}$} \copy0 \kern-\wd0 \kern.2pt
      \copy0 \kern-\wd0 \kern.2pt
      \copy0 \kern-\wd0 \kern.2pt \copy0 \kern-\wd0 \kern.2pt
      \copy0 \kern-\wd0 \kern.2pt \box0}
\mathchardef\za="710B  
\mathchardef\zb="710C  
\mathchardef\zg="710D  
\mathchardef\zd="710E  
\mathchardef\zve="710F 
\mathchardef\zz="7110  
\mathchardef\zh="7111  
\mathchardef\zvy="7112 
\mathchardef\zi="7113  
\mathchardef\zk="7114  
\mathchardef\zl="7115  
\mathchardef\zm="7116  
\mathchardef\zn="7117  
\mathchardef\zx="7118  
\mathchardef\zp="7119  
\mathchardef\zr="711A  
\mathchardef\zs="711B  
\mathchardef\zt="711C  
\mathchardef\zu="711D  
\mathchardef\zvf="711E 
\mathchardef\zq="711F  
\mathchardef\zc="7120  
\mathchardef\zw="7121  
\mathchardef\ze="7122  
\mathchardef\zy="7123  
\mathchardef\zf="7124  
\mathchardef\zvr="7125 
\mathchardef\zvs="7126 
\mathchardef\zf="7127  
\mathchardef\zG="7000  
\mathchardef\zD="7001  
\mathchardef\zY="7002  
\mathchardef\zL="7003  
\mathchardef\zX="7004  
\mathchardef\zP="7005  
\mathchardef\zS="7006  
\mathchardef\zU="7007  
\mathchardef\zF="7008  
\mathchardef\zW="700A  
\mathchardef\zC="7009  
\newcommand{\be}{\begin{equation}}
\newcommand{\ee}{\end{equation}}
\newcommand{\bea}{\begin{eqnarray}}
\newcommand{\eea}{\end{eqnarray}}
\newcommand{\beas}{\begin{eqnarray*}}
\newcommand{\eeas}{\end{eqnarray*}}
\def\*{{\textstyle *}}
\newcommand{\R}{{\mathbb R}}
\newcommand{\s}{{\textstyle *}}
\newcommand{\ti}{\times}
\def\sT{{\sss T}}
\def\xi{\tx{i}}
\def\s*{{\scriptstyle *}}
\def\rel{-\!\!-\triangleright}
\newcommand{\bfr}{\begin{frame}}
\newcommand{\efr}{\end{frame} }
\def\rel{{-\!\!\!-\!\!\rhd}}
\newcommand{\defequal}{\stackrel{\mbox {\tiny {def}}}{=}}
\newcommand{\inverse}{^{-1}}
\newcommand{\calf}{{\cal F}}
\newtheorem{theorem}{Theorem}[section]
\newtheorem{proposition}[theorem]{Proposition}
\theoremstyle{definition}
\newtheorem{definition}[theorem]{Definition}
\newtheorem{remarkth}[theorem]{Remark}
\newtheorem{example}[theorem]{Example}
\begin{document}
\bibliographystyle{plain}

\author{\\
        {\Large Janusz Grabowski}}

\date{}
\title{\bf An introduction to loopoids\thanks{Research funded by the  Polish National Science Centre grant under the contract number DEC-2012/06/A/ST1/00256.}}
\maketitle
\begin{abstract}
We discuss a concept of \emph{loopoid} as a non-associative generalization of Brandt groupoid. We introduce and study also an interesting class of more general objects which we call \emph{semiloopoids}. A differential version of loopoids is intended as a framework for Lagrangian discrete mechanics.
\end{abstract}

\noindent{\bf Keywords}: group, Brandt groupoid, Lie group, loop, transversals, discrete mechanics.

\medskip\noindent{\bf MSC 2010}: Primary 20L05, 20N05, 22A22; Secondary 22E15, 22E60, 58H05

\maketitle


\section{Introduction}
Compared to the theory of groups, the theory of quasi-groups is considerably
older, dating back at least to Euler's work on orthogonal Latin squares. But later, the theory of
quasi-groups was eclipsed by the phenomenal development of the theory of groups and Lie groups.
With the initial completion of the classification of the finite simple groups, however, attention is once again becoming more
evenly divided between the two theories. Also the theory of smooth quasi-groups and loops started to find interesting applications in geometry and physics.

We refer to the books \cite{Br,Bel, Pfl, Sab} and the survey articles \cite{Br1,Sab1,Smi} if terms and concepts from non-associative algebra, especially loops, are concerned.
However, for completeness and reader's convenience, we recall basic definitions.

Let us recall that a \emph{quasi-group} is is an algebraic structure $<G,\cdot>$ with a binary operation (written usually as juxtaposition, $a\cdot b=ab$) such
that
$r_g: x \mapsto xg$ (the \emph{right translation}) and
$l_g: x \mapsto gx$ (the \emph{left translation})
are permutations of $G$, equivalently, in which the equations
$ya = b$ and $ax = b$ are soluble uniquely for $x$ and $y$ respectively. If we assume only that left (resp., right) translations are permutations, we speak about a \emph{left quasi-group} (resp., \emph{right quasi-group}. A \emph{left loop} is defined to be a left quasi-group with a right identity $e$, i.e. $xe=x$, while a \emph{right loop} is a right quasi-group with a left identity, $ex=x$. A \emph{loop} is a quasi-group with a two-sided identity element,
$e$, $e x=x e=x$. A loop $<G,\cdot , e>$ with identity $e$ is called an \emph{inverse
loop} if to each element $a$ in G there corresponds an element $a^{-1}$ in $G$ such that
$$a^{-1}(a b) = (b a) a^{-1} =b$$ for all $b\in G$.
It can be easily shown that in an inverse loop $<G, \cdot, {}^{-1}, e>$ we have,
for all $a, b \in G$,
$$aa^{-1} = a^{-1}a = e,\quad (a^{-1})^{-1} = a,\quad \text{and}\quad  (ab)^{-1} = b^{-1} a^{-1}\,.$$
Loops, or more generally left loops, appear naturally as algebraic structures
on \emph{transversals} or \emph{sections} of a subgroup of a group. This observation, going
back to R. Baer \cite{Bae} (cf. also \cite{Fo,KW}), lies at the heart of much current research on loops, also in differential geometry and analysis.

\begin{example}\label{tr} Let $G$ be a group with the unit $e$, $H$ be its subgroup, and $S\subset G$ be a left transversal to $H$ in $G$, i.e. $S$ contains exactly one point from each coset $gH$ in $G/H$. This means that any element $g\in G$ has a unique decomposition $g=sh$, where $s\in S$ and $h\in H$. This produces an identification $G=S\times H$ of sets. Let $p_S:G\to S$ be the projection on $S$ determined by this identification. If we assume that $e\in S$, then $S$ with the multiplication $$s\circ s'=p_S(ss')$$ and $e$ as a right unit is a left loop.

Indeed, as $e\circ s=s\circ e=p_S(s)=s$, $e$ is the unit for this multiplication. For $a,b\in S$, there is $h\in H$ such that $p_S(a^{-1}b)=a^{-1}bh$. Hence,
$$p_S(ap_S(a^{-1}b))=p_S(aa^{-1}bh)=p_S(bh)=b\,,$$
that shows that $p_S(a^{-1}b)$ is a solution of the equation $a\circ x=b$. If $c,c'$ are two such solutions, then $p_S(ac)=p_S(ac')$, so there is $h\in H$ such that $ac=ac'h$, so $c=c'h$ and $c=c'$, since $S$ is transversal to $H$.
\end{example}

In this paper, we would like to propose a concepts of \emph{loopoid}, defined as a nonassociative generalization of a groupoid. Note that here and throughout the paper, by \emph{groupoid} we understand a \emph{Brandt groupoid}, i.e. a small category in which every morphism is an isomorphism, and not an object called in algebra also a \emph{magma}.
These are loops which can be considered as nonassociative generalizations of groups. In the case of genuine groupoids, however, the situation is more complicated, because the multiplication is only partially defined, so the axioms of a loop  must be reformulated.

A convenient way is to think about groupoids as being defined exactly like groups but with the difference that all objects/maps in the definition are relations, like it has been done by Zakrzewski \cite{Zak}. In particular, the unity is a relation $\ze:\{ e\}\rel\ G$, associating to a point $e$ a subset $M=\ze(e)\subset G$, the set of units. Using this idea, we define semiloopoids, as well as more specific objects which we will call \emph{loopoids}.

We want to stress that our motivation comes from discrete mechanics, where Lie groupoids have been recently used for a geometric formulation of the Lagrangian formalism \cite{FZ,IMMM,IMMP,MMM,MMS,Stern,weinstein}. Infinitesimal parts of Lie groupoids are Lie algebroids and the corresponding `Lie theory' is well established (cf. \cite{Ma}).
We believe that this can be extended to a differential version of the concept of (semi)loopoid, a \emph{differential (semi)loopoid}.  

As the infinitesimal version of associativity is the Jacobi identity, the corresponding `brackets' will not satisfy the latter.
Note that in the literature there are already natural various generalizations of Lie algebroids, e.g. \emph{skew algebroids}, \emph{almost Lie algebroids}, or \emph{Dirac algebroids} \cite{GG,GG1,GJ,GU1,GU2}, where no Jacobi identity is assumed. For instance, the skew algebroid formalism is very useful in describing the geometry of nonholonomic systems \cite{GLMM}. We believe that we can obtain skew and/or almost Lie algebroids as infinitesimal parts of differential loopoids and that standard geometric constructions of the tangent and cotangent groupoid for a given Lie groupoid can be extended to this category. We postpone, however, these questions to a separate paper.

Note finally, that after writing the first version of these notes, we learned that the term \emph{loopoid} has appeared already in a paper by Kinyon \cite{Kin} in a similar context. The motivating example, however, built as an object `integrating' the Courant bracket on $\sT M\oplus_M\sT^*M$, uses the group of diffeomorphisms of the manifold $M$ as integrating the Lie algebra of vector fields on $M$, not the pair groupoid $M\ti M$ as `integrating' the Lie algebroid $\sT M$.

\section{Groupoids}

\begin{definition}
A \emph{groupoid} over a set $M$ is a set $G $ equipped
with source and target mappings $\alpha,\beta:G\to M$,  a
multiplication map $m$ from $G_{2}\defequal \{(g,h)\in
G \times G |\ \beta (g)=\alpha (h)\}$ to $G $,  an  injective
units mapping $\epsilon :M\rightarrow G $,  and an
inversion mapping $\iota :G \rightarrow G$,
satisfying the following properties (where we write $gh$ for $m(g,h)$
and $g\inverse$ for $\iota (g)$):
\begin{itemize}

\item {(associativity)}  $g(hk)=(gh)k$ in the sense that, if
one side of the equation is defined, so is the other, and then they
are equal;

\item {(identities)} $\epsilon (\alpha (g))g=g=g\epsilon (\beta (g))$;

\item {(inverses)} $gg\inverse =\epsilon(\alpha (g))$ and $g\inverse
g=\epsilon (\beta (g))$.
\end{itemize}
\end{definition}
\vskip-.1cm  The elements of $G _{2}$ are sometimes referred to as \emph{composable} (or \emph{admissible}) pairs. A groupoid $G$ over a set $M$ will be denoted
$G \rightrightarrows M$. Note that the full information about the groupoid is contained in the
\emph{multiplication relation} which is a subset $G_3\subset G\ti G\ti G$,
\be\label{mr}
G_3=\left\{(x,y,z)\in G\ti G\ti G\, |\ (x,y)\in G_2\ \text{and}\ z=xy\right\}\,.
\ee

\begin{example}\label{e1} ({\bf pair groupoid}) Let $M$ be a set and $G=M\ti M$ and
$$\za(u,v)=u\,, \quad\zb(u,v)=v\,.$$
Then, $M\ti M$ is a groupoid over $M$ with the source and target maps $\za,\zb$, units mapping $\ze(u)=(u,u)$, and the partial composition by $(u,v)(v,z)=(u,z)$. In other words,
$$G_3=\left\{(u,v,v,z,u,z)\in G\ti G\ti G\, |\ u,v,z\in M\right\}\,.$$
\end{example}

\begin{remarkth}
We can regard $M$, \emph{via} the embedding $\ze$, as a subset in $G$, and thus $\ze$ as the identity, that simplifies the picture, since $\za,\zb$ become just projections in $G$. Indeed, in view of associativity, 
$$(\ze(\za(g))^2g=(\ze(\za(g))(((\ze(\za(g))g)=\ze(\za(g))g\,,
$$
so that $(\ze(\za(g))^2=\ze(\za(g))=\ze(\za(\ze(\za(g))))\ze(\za(g))$
and, consequently
$$\ze(\za(\ze(\za(g))))=\ze(\za(g))\,,$$
i.e. $\ze\circ\za$ is a projection, $(\ze\circ\za)^2=\ze\circ\za$. Similarly, $\ze\circ\zb$ is a projection.
We will use this convention in the sequel.
\end{remarkth}

\section{Semiloopoids}
Following the Zakrzewski's idea of obtaining the definition of a groupoid by replacing the objects in the definition of a group by relations, we propose the following.
\begin{definition}
 A \emph{semiloopoid over a set $M$} is a structure consisting of a set $G$ together with projections $\za,\zb:G\to M$ onto a subset $M\subset G$ (\emph{set of units}) and a multiplication relation $G_3\subset G\ti G\ti G$ such that, for each $g\in G$,
\be\label{units} (\za(g),g,g)\in G_3\quad\text{and}\quad (g,\zb(g),g)\in G_3\,,
 \ee
and the relations $l_g,r_g\subset G\ti G$ defined by
 \bea\label{multl} (h_1,h_2)\in l_g\ &\Leftrightarrow& (g,h_1,h_2)\in G_3\,,\\
 (h_1,h_2)\in r_g\ &\Leftrightarrow& (h_1,g,h_2)\in G_3\,.\label{multr}
 \eea
 are injective.
If we forget condition (\ref{multr}) (resp., (\ref{multl})), then we speak about a \emph{left} (resp. \emph{right}) \emph{semiloopoid}.
 
 A \emph{semiloopoid morphism} between semiloopoids $G,H$ over $M,N$, respectively, is is a pair of maps $(\Phi, \phi)$, where $\Phi:G\to H$ and $\phi:M\to N$, satisfying
 $$\Phi_{|M}=\phi\,,\quad \za_H\circ\Phi=\phi\circ\za_G\,,\quad \zb_H\circ\Phi=\phi\circ\zb_G\,,
 $$
 and such that $(\Phi,\Phi,\Phi):G\ti G\ti G\to H\ti H\ti H$ maps $G_3$ into $H_3$.
 The last condition means:
 \be\label{mor}\Phi(gh)=\Phi(g)\Phi(h)\,,
 \ee
provided $(g,h)\in G_2$.
 \end{definition}

 Denote the range of the projection of $G_3$ onto the first two factors $G\ti G$ with $G_2$. Condition
 (\ref{multl}) (or (\ref{multr})) implies that $G_3$ is actually the graph of a map
 $m:G_2\to G$, so we can write $z=m(g,h)$, or simply $z=gh$, instead of $(g,h,z)\in G_3$.
 In particular, in the above notation, $\za(g)g=g$ and $g\zb(g)=g$.

 Consequently, we will write $l_gh=z$ and $r_gh=z$ instead of $(h,z)\in l_g$ and
 $(h,z)\in r_g$, respectively. We can therefore view $l_g$ and $r_g$ as bijections defined on their domains, $D_g^l$ and $D_g^r$ onto their ranges, $R_g^l$ and $R_g^r$, respectively.
The definition of a semiloopoid can be therefore reformulated in a more instructive way as follows.

\begin{definition} {\bf (alternative)}
A \emph{semiloopoid over a set $M$} is a structure consisting of a set $G$ including $M$ and equipped with
\begin{itemize}

\item a \emph{partial multiplication} $m: G\ti G\supset G_{2} \to G$, $m(g,h)=gh$,
such that, for all $g\in G$,
\be\label{l}l_g:D_g^l\to R_g^l\,,\ l_gh=gh\,,
 \ee
 is a bijection from $D_g^l=\{ h\in G\, |\, (g,h)\in G_2\}$ onto $R_g^l=\{ gh\, |(g,h)\in G_2\}$, and
 \be\label{r}r_g:D_g^r\to R_g^r\,,\ r_gh=hg\,,
  \ee
  is a bijection from $D_g^r=\{ h\in G\, |\, (h,g)\in G_2\}$ onto $R_g^r=\{ hg\, |(h,g)\in G_2\}$;

\item a pair of projections $\alpha,\beta: G \to M$ such that, for all $g\in G$,
\be\label{e}\alpha(g)g=g\,,\ g\beta(g)=g\,.
\ee
\end{itemize}
\end{definition}
\begin{remarkth}
Note that in a semiloopoid all the structural maps, $\za,\zb,m$, are determined by just $G_3$ (cf. (\ref{units})). 

\end{remarkth}

\begin{example}\label{e2} {\bf (trivial semiloopoid over $M$)} On a set $G$ including $M$ let us choose
projections $\za,\zb:G\to M$ and put
$$G_2=\{ (\za(g),g)\,|\, g\in G\}\cup\{ (g,\zb(g))\,|\, g\in G\}\,.$$
The map $m:G_2\to G$, given by $m(\za(g),g)=g=m(g,\zb(g))$, establishes on $G$ a structure of a semiloopoid  over $M$.
\end{example}
\begin{example}\label{ex}
We can make the above example more complicated, choosing $g_0\in G\setminus M$, a subset $A\subset G$, $A\cap M=\{\zb(g_0)\}$ and an injective map $l_0:A\to G\setminus M$ such that $l(\zb(g_0))=g_0$. Then, we obtain a semiloopoid by putting
$$G_2=\{ (\za(g),g)\,|\, g\in G\}\cup\{ (g,\zb(g))\,|\, g\in G\}\cup\{ (g_0,h)\,|\, h\in A\}\,,$$
and the partial multiplication $m:G_2\to G$ which, besides the unity property (\ref{e}), satisfies
$$m(g_0,h)=l_0(h)\,,\ \text{for}\ h\in B\,.
$$
\end{example}
\begin{definition} A semiloopoid will be called a \emph{left inverse semiloopoid} if there is a \emph{left inversion map} $\zi_l: G \to G$ such that for each $(g,h)\in G_2$ also $(\zi_l(g),gh)\in G_2$ and $\zi_l(g)(gh)=h$. A \emph{right inverse semiloopoid} can be defined analogously.

A semiloopoid will be called an \emph{inverse semiloopoid} if there is an \emph{inversion map} $\zi: G \to G$, to be denoted simply by $\zi(g)=g^{-1}$, such that, for each $(g,h),(u,g)\in G_2$, also $(g^{-1},gh),(ug,g^{-1})\in G_2$ and
$$g^{-1}(gh)=h\,,\quad (ug)g^{-1}=u\,.
$$
\end{definition}
\begin{proposition} In any inverse semiloopoid the following hold true:
\be\label{il} g^{-1}g=\zb(g)=\za(g^{-1})\,,\quad gg^{-1}=\za(g)=\zb(g^{-1})\,,\quad \left(g^{-1}\right)^{-1}=g\,,\quad (gh)^{-1}=h^{-1}g^{-1}\,.
\ee
The latter condition means that one side of the equality makes sens if and only if the other makes sense (the elements are composable) and they are equal.
\end{proposition}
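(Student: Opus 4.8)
The plan is to prove the four assertions successively, using as the only tools the two defining inverse relations $g^{-1}(gh)=h$ and $(ug)g^{-1}=u$ together with the \emph{cancellation} supplied by the injectivity of the translations $l_g$ and $r_g$; I will deliberately avoid any appeal to associativity, which is unavailable here. The whole proposition reduces to choosing, at each step, the right inverse relation and applying it to the right composable pair.

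First I would read off $g^{-1}g=\zb(g)$ and $gg^{-1}=\za(g)$ directly. Since $g\zb(g)=g$, the pair $(g,\zb(g))$ lies in $G_2$, so the left-inverse relation applied to it yields $g^{-1}(g\zb(g))=\zb(g)$; rewriting $g\zb(g)=g$ gives $g^{-1}g=\zb(g)$. Symmetrically, applying the right-inverse relation to $(\za(g),g)$ gives $gg^{-1}=\za(g)$. For the source/target identities of $g^{-1}$, I would feed these back into the relations: from $(g,g^{-1})\in G_2$ the left-inverse relation gives $g^{-1}\za(g)=g^{-1}$, while the unit axiom gives $g^{-1}\zb(g^{-1})=g^{-1}$; injectivity of $l_{g^{-1}}$ then forces $\za(g)=\zb(g^{-1})$, and the mirror argument with $r_{g^{-1}}$ gives $\zb(g)=\za(g^{-1})$. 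The involutivity $(g^{-1})^{-1}=g$ comes out the same way: applying the left-inverse relation to $(g^{-1},g)$ produces $(g^{-1})^{-1}\zb(g)=g$, which together with $g\zb(g)=g$ and injectivity of $r_{\zb(g)}$ yields $(g^{-1})^{-1}=g$.

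The anti-homomorphism property $(gh)^{-1}=h^{-1}g^{-1}$ is the crux, and I expect to obtain it from a chain of three applications of the inverse relations. Writing $z=gh$ with $(g,h)\in G_2$: the right-inverse relation on $(g,h)$ gives $zh^{-1}=g$ with $(z,h^{-1})\in G_2$; the left-inverse relation on $(z,h^{-1})$ gives $z^{-1}g=h^{-1}$ with $(z^{-1},g)\in G_2$; and finally the right-inverse relation on $(z^{-1},g)$ gives $(h^{-1},g^{-1})\in G_2$ together with $h^{-1}g^{-1}=z^{-1}$, which is exactly the claim, including the composability of $(h^{-1},g^{-1})$. The stated equivalence ``one side is defined iff the other is'' then follows by symmetry: applying the implication just proved to the pair $(h^{-1},g^{-1})$ and using $(g^{-1})^{-1}=g$, $(h^{-1})^{-1}=h$ shows that composability of $(h^{-1},g^{-1})$ forces composability of $(g,h)$.

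The main obstacle, and the point demanding care throughout, is the absence of associativity: every cancellation must be justified by the injectivity of a specific translation rather than by rearranging a triple product, and at each step one must keep careful track of which pairs actually lie in $G_2$, since the inverse relations guarantee that the products in question are defined only when their hypotheses are met. No step is computationally heavy; the entire proposition is a bookkeeping exercise, and the only real content is selecting the correct relation to apply to the correct composable pair.
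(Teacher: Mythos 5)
Your proof is correct and takes essentially the same approach as the paper: the derivations of $g^{-1}g=\zb(g)$, $gg^{-1}=\za(g)$ and of involutivity (via injectivity of $r_{\zb(g)}$) are identical, and your three-step chain for $(gh)^{-1}=h^{-1}g^{-1}$ is just the mirror image (right--left--right instead of the paper's left--right--left applications of the defining relations), yielding the composability of $(h^{-1},g^{-1})$ along the way exactly as the paper does. The only cosmetic differences are that you obtain $\za(g)=\zb(g^{-1})$ and $\zb(g)=\za(g^{-1})$ by direct injectivity arguments before proving involutivity (the paper derives them afterwards as consequences), and you spell out the symmetry argument for the ``if and only if'' of composability, which the paper leaves implicit.
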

\begin{proof}
By definition of the inverse, $g^{-1}g=g^{-1}(g\zb(g))=\zb(g)$ and, similarly, $gg^{-1}=\za(g)$. Now,
$$\left(g^{-1}\right)^{-1}\zb(g)=\left(g^{-1}\right)^{-1}(g^{-1}g)=g\,.$$
But also $g\zb(g)=g$, thus $g=\left(g^{-1}\right)^{-1}$ and
$$\zb(g)g^{-1}g=g^{-1}\left(g^{-1}\right)^{-1}=\za(g^{-1})\,.$$
Consequently, $\zb(g^{-1})=\za(g)$ and, finally,
$$(gh)^{-1}=h^{-1}(h(gh)^{-1})=h^{-1}\left((g^{-1}(gh))(gh)^{-1}\right)=h^{-1}g^{-1}\,.
$$
\end{proof}
\begin{example}
In example \ref{e2} we can use an involutive bijection $\zi$ of $G$, intertwining $\za$ and $\zb$, for extending the partial multiplication so that we will obtain an inverse semiloopoid.
\end{example}
\section{Tranversals}
In the context of semiloopoids, we want to define \emph{transversals}, similar but more general than these described in Example \ref{tr}, to produce new examples of semiloopoids.
\begin{definition}\label{trans}
Let $G$ be a semiloopoid. A pair $(T,\pi)$, where $T\subset G$ and $\pi:G\to T$ is a projection is called a \emph{transversal} in $G$ if
\begin{itemize}
\item \be\label{0}\za(T),\zb(T)\subset T\,,\ee
\item  for $t\in T$, the relations $l_t^T,r_t^T\subset T\ti G$, reducing the relations $l_t,r_t$ to $T\ti G$, i.e.
 \bea\label{multln} (t',g)\in l_t^T\ &\Leftrightarrow& (t,t',g)\in G_3\,,\\
 (t',g)\in r_t^T\ &\Leftrightarrow& (t',t,g)\in G_3\,,\label{multrn}
 \eea
 are \emph{transversal to $\pi$} in the sense that the composition relations $\pi\circ l_t,\pi\circ r_t\subset T\ti T$ are injective. In other words, for $t,t',u,u'\in T$, if $tt'$ and $uu'$ are different elements in the same fiber of $\pi$, then $t\ne t'$ and $u\ne u'$.
\end{itemize}
 If we forget condition (\ref{multrn}) (resp., (\ref{multln})), then we speak about a \emph{left}
(resp. \emph{right}) \emph{transversal}.
\end{definition}
\begin{example}
The transversal described in Example \ref{tr} is an example of a left transversal in $G$ in the above sense. In this case, $T=S$ and $\zp=p_S$. The first condition of Definition \ref{trans} means that $e\in S$. Condition (\ref{multln}), in turn, means
that, for $s,s',s''\in S$, if $p_S(ss')=p_S(ss'')$, then $s'=s''$. It is clearly satisfied, since $p_S(ss')=p_S(ss'')$ implies that there is $h,\in H$ such that $ss''=ss'h$, thus $s''=s'h$.
From the uniqueness of the decomposition $G=SH$ we get $s''=s'$.
\end{example}
\begin{proposition}
If $(T,\pi)$ is a (left, right) transversal in a semiloopoid $G$ with the source and target maps $\za,\zb:G\to M$, then $T$ is a (left, right) semiloopoid itself with the source and target maps
$$\za_{|T},\zb_{|T}:T\to M_T:=\za(T)=\zb(T)\subset T$$
and the partial multiplication $m_T:T_2:=G_2\cap(T\ti T)\to T$
$$t\bullet t'=\pi(tt')\,.
$$
\end{proposition}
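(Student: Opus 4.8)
The plan is to verify, for the candidate structure $(T,m_T,\za_{|T},\zb_{|T})$, the two items of the alternative definition of a semiloopoid, reducing each to the defining properties of the transversal $(T,\pi)$ together with the semiloopoid axioms of $G$. Throughout I use that $\za$ and $\zb$, being projections onto $M$, restrict to the identity on $M$ and have image contained in $M$.

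First I would fix the set of units. The equality $\za(T)=\zb(T)$ asserted in the statement is itself a consequence of (\ref{0}): for $t\in T$ the element $\za(t)$ lies in $T$ by (\ref{0}) and in $M$ by definition of $\za$, whence $\zb(\za(t))=\za(t)$ exhibits $\za(t)$ as an element of $\zb(T)$; this gives $\za(T)\subseteq\zb(T)$, and the reverse inclusion follows by the symmetric argument. Hence $M_T:=\za(T)=\zb(T)$ is well defined, is contained in $T$ by (\ref{0}), and lies in $M$; since $\za$ and $\zb$ fix $M$ pointwise, their restrictions $\za_{|T},\zb_{|T}$ are projections of $T$ onto $M_T$, as required.

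Next I would check that $m_T$ is a partial map $T_2\to T$ with $M_T$ as units. Well-definedness is immediate, since for $(t,t')\in T_2=G_2\cap(T\ti T)$ the product $tt'$ exists in $G$ and $t\bullet t'=\pi(tt')\in T$. For the unit axioms, $(\za(t),t)\in G_2$ by (\ref{e}) and $(\za(t),t)\in T\ti T$ by (\ref{0}), so $(\za(t),t)\in T_2$ and $\za_{|T}(t)\bullet t=\pi(\za(t)t)=\pi(t)=t$, using $\za(t)t=t$ in $G$ and $\pi_{|T}=\mathrm{id}$; the identity $t\bullet\zb_{|T}(t)=t$ is obtained symmetrically.

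The heart of the argument --- and the only place transversality enters --- is the bijectivity of the translations of $T$. Fix $t\in T$ and consider $l_t^\bullet\colon t'\mapsto t\bullet t'=\pi(tt')$ on its domain $\{t'\in T:(t,t')\in G_2\}$. As a composite of functions it is single-valued, and it is surjective onto its range by construction, so only injectivity is at stake; but $l_t^\bullet$ is exactly the relation $\pi\circ l_t$ of (\ref{multln}) read as a map $T\to T$, and the transversality requirement that $\pi\circ l_t$ be injective says precisely that $\pi(tt')=\pi(tt'')$ forces $t'=t''$. Thus $l_t^\bullet$ is a bijection onto its range, and the symmetric use of (\ref{multrn}) handles $r_t^\bullet$. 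I expect this to be the only genuine obstacle, since it is here that two distinct products could collapse under $\pi$, and the transversality hypothesis is tailored precisely to forbid this. Finally, dropping (\ref{multrn}) (resp. (\ref{multln})) one verifies only $l_t^\bullet$ (resp. $r_t^\bullet$), which gives the left (resp. right) transversal version.
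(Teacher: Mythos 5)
Your proof is correct and follows essentially the same route as the paper's (much terser) proof: the equality $\za(T)=\zb(T)$ is deduced from condition (\ref{0}) together with the fact that $\za,\zb$ fix $M$ pointwise, and injectivity of the induced translations $t'\mapsto\pi(tt')$ is read off directly from the transversality conditions (\ref{multln}) and (\ref{multrn}). The extra details you supply (well-definedness of $m_T$, the unit identities via $\pi_{|T}=\mathrm{id}$) are exactly what the paper dismisses as trivial.
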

\begin{proof} Since $\za(\zb(T))=\zb(T)\subset T$, we have trivially $\za(T)=\zb(T)$ as the set of units in $T$.
The injectivity of the left and/or right translations follows from the corresponding transversality properties.
\end{proof}
\section{Loopoids}
As shows example \ref{ex}, the maps $\za,\zb$ in can be rather pathological, if their general properties are concerned.
Let us assume now, that a semiloopoid $G$ over $M$, with a partial multiplication $m$ and projections $\za,\zb:G\to M$, satisfies a very weak associativity condition, hereafter called \emph{unities associativity}:
\be\label{ua} (xy)z=x(yz)\ \text{if one of}\ x,y,z\ \text{is a unit}\ (\text{i.e. belongs to}\ M)\,.
\ee
The above condition has to be understood as follows: if one side of equation (\ref{ua}) makes sense, the other makes sense and we have equality.
The following proposition shows that the condition of unities associativity for a semiloopoid over $M$ is rather strong and implies that the \emph{anchor map} $(\za,\zb):G\to M\ti M$ has nice properties, similar to these for groupoids.
\begin{proposition}
A semiloopoid $G$ over $M$ satisfyies the unities associativity condition if and only if
\be\label{ag1}
G_{2}= \{(g,h)\in
G \times G \,|\ \beta (g)=\alpha (h)\}
\ee
and
\be\label{ag2}
(\za,\zb):G\to M\ti M
\ee
is a semiloopoid morphism into the pair groupoid $M\ti M$, i.e.
\be\label{agc}
\za(gh)=\za(g)\ \text{and}\ \zb(gh)=\zb(h)\,.
\ee
\end{proposition}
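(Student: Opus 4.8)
The plan is to prove the two implications separately, in each case reducing the associativity equation \eqref{ua} to two checks: that the two bracketings have the \emph{same domain of definition} (so the ``one side makes sense iff the other'' clause holds) and that, when defined, they return the \emph{same value}. The mechanism that makes everything collapse is the units axiom \eqref{e}: whenever one of the three factors is the unit attached to the relevant $\za$ or $\zb$, multiplication by it is trivial, e.g. $\za(y)\,y=y$ and $y\,\zb(y)=y$. Thus the real work is bookkeeping of domains, and \eqref{ag1} together with \eqref{agc} is exactly what converts ``$(g,h)\in G_2$'' into the numerical condition ``$\zb(g)=\za(h)$''.

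For the implication ``\eqref{ag1} and \eqref{agc} $\Rightarrow$ \eqref{ua}'' I would treat the three positions of the unit in turn. If $x=u\in M$, then $(u,y)\in G_2$ means $u=\za(y)$ by \eqref{ag1}, whence $uy=\za(y)y=y$ and so $(uy)z=yz$; on the other side $u(yz)=\za(y)(yz)=\za(yz)(yz)=yz$, using $\za(yz)=\za(y)$ from \eqref{agc} and the units axiom again. The domains agree because, by \eqref{ag1} and \eqref{agc}, both sides are defined precisely when $u=\za(y)$ and $\zb(y)=\za(z)$. The case $z=u$ (using $y\zb(y)=y$ and $\zb(yz)=\zb(z)$) and the middle case $y=u$ (where $xu=x$ and $uz=z$) are entirely parallel. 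This direction is routine once the collapse-by-units observation is in place.

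For the converse ``\eqref{ua} $\Rightarrow$ \eqref{ag1} and \eqref{agc}'', the good news is that \eqref{agc} and one inclusion of \eqref{ag1} follow cleanly by combining \eqref{ua} with the \emph{injectivity} of the translations \eqref{multl}--\eqref{multr}. Given $(g,h)\in G_2$, applying \eqref{ua} to $(\za(g),g,h)$ gives $\za(g)(gh)=(\za(g)g)h=gh$; comparing with the units identity $\za(gh)(gh)=gh$ and using injectivity of $r_{gh}$ yields $\za(gh)=\za(g)$, and symmetrically $\zb(gh)=\zb(h)$ via $l_{gh}$, which is \eqref{agc}. For the inclusion $G_2\subseteq\{\zb(g)=\za(h)\}$, apply \eqref{ua} to $(g,\zb(g),h)$: since $g\zb(g)=g$ the left bracketing $(g\zb(g))h=gh$ is defined, so $\zb(g)h$ is defined and $g(\zb(g)h)=gh$; injectivity of $l_g$ forces $\zb(g)h=h$, and injectivity of $r_h$ against $\za(h)h=h$ then gives $\zb(g)=\za(h)$.

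The main obstacle is the reverse inclusion $\{\zb(g)=\za(h)\}\subseteq G_2$, i.e. that $\zb(g)=\za(h)$ \emph{forces} $gh$ to be defined. Here the unit-associativity relations give no leverage: with $u:=\zb(g)=\za(h)$ one has $gu=g$ and $uh=h$, and every instance of \eqref{ua} on the triple $(g,u,h)$ merely rewrites $(gu)h$ as $g(uh)$, both of which equal the product $gh$ whose very existence is in question, so the argument is circular. What is really needed is a \emph{totality} statement, namely that $l_g$ is defined on the whole $\za$-fibre over $\zb(g)$ (equivalently, that unit translations are surjective onto the appropriate fibres), a property automatic for honest groupoids and for the transversal examples but not delivered by injectivity and unit-associativity alone. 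I would therefore expect this step to require an extra non-degeneracy hypothesis on the translations, and isolating exactly this surjectivity/totality assumption is where I would focus the most care.
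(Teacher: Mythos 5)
Your argument coincides, step for step, with the paper's own proof on everything the paper actually proves. Your converse direction is the same unit-position case analysis (collapse via the units axiom \eqref{e}, plus $e=\zb(e)=\za(gh)=\za(g)$ to match domains), and your forward derivations of \eqref{agc} (injectivity of $r_{gh}$ applied to $\za(g)(gh)=gh=\za(gh)(gh)$, and of $l_{gh}$ for the $\zb$-identity) and of the inclusion $G_2\subseteq\{(g,h):\zb(g)=\za(h)\}$ (injectivity of $l_g$, then of $r_h$) are exactly the computations in the paper's proof.

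The obstacle you isolate at the end is genuine, and it is a gap in the paper rather than a defect of your attempt. The paper's forward direction establishes only the inclusion $G_2\subseteq\{(g,h):\zb(g)=\za(h)\}$ together with \eqref{agc}; the reverse inclusion demanded by the set equality \eqref{ag1} is never addressed, and under the paper's stated reading of \eqref{ua} (``if one side makes sense, the other makes sense and they are equal'') it is actually false. Your circularity diagnosis is exactly right: for $u=\zb(g)=\za(h)$, either bracketing of the triple $(g,u,h)$ makes sense precisely when $gh$ is defined, so \eqref{ua} is vacuous on such triples. Concretely, the trivial semiloopoid of Example \ref{e2} over a one-point set $M=\{e\}$, say $G=\{e,a,b\}$ with $\za=\zb\equiv e$, satisfies \eqref{ua} (for any triple with a unit entry, either side is defined iff the remaining pair is composable, and then both sides equal that product), yet $\zb(a)=e=\za(b)$ while $(a,b)\notin G_2$. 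The statement becomes correct either by weakening \eqref{ag1} to the inclusion $G_2\subseteq\{(g,h):\zb(g)=\za(h)\}$ --- the converse direction, both yours and the paper's, uses only this inclusion, the units axiom and \eqref{agc} --- or by strengthening the reading of \eqref{ua} to ``if $xy$ and $yz$ are defined, then $(xy)z$ and $x(yz)$ are defined and equal,'' which is precisely the totality hypothesis you ask for; note that the paper tacitly uses this totality later, when it asserts that $l_g$ in \eqref{aga} is defined on the whole fibre $\calf^{\za}(\zb(g))$.
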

\begin{proof}
If $(g,h)\in G_2$, then according to (\ref{ua}),
$$g(\za(h)h)=gh=(g\zb(g))h=g(\zb(g)h)\,,$$
so, in view of injectivity of $l_g$, we have $h=\za(h)h=\zb(g)h$ and, consequently,
$\zb(g)=\za(h)$. Similarly,
$$\za(gh)(gh)=gh=(\za(g)g)h=\za(g)(gh)\,,$$
so $\za(gh)=\za(g)$. Analogously we can prove $\zb(gh)=\zb(h)$.

Conversely, let $e\in M$ be such that $e(gh)$ makes sense. Then,
$$e=\zb(e)=\za(gh)=\za(g)\,.$$
Hence
\be\label{a}e(gh)=\za(gh)(gh)=gh=(\za(g)g)h=(eg)h\,.
\ee
If this is $(eg)h$ that makes sense, then
$$e=\zb(e)=\za(g)=\za(gh)$$ and we have (\ref{a}) again.
Similarly we prove $(ge)h=g(eh)$ and $(gh)e=g(he)$.
\end{proof}
The inverse images of points under the source and target maps we call
$\alpha $- and \emph{$\beta $-fibres}.  The fibres through a point $g$,
will be denoted by $\calf ^{\alpha }(g)$ and $\calf ^{\beta }(g)$, respectively.
The unities associativity assumption implies that each element $g$ of $G
$ determines the \emph{left and right translation maps}
\be\label{aga}{l}_{g}:\calf ^{\alpha
}(\beta (g))\rightarrow \calf ^{\alpha }(\alpha  (g))\,, \quad
r_{g}:\calf ^{\beta }(\alpha  (g))\rightarrow \calf ^{\beta }(\beta
(g))\,,
\ee
which are injective.
\begin{definition}
A semiloopoid satisfying the unities associativity assumption  and such that the maps (\ref{aga}) are bijective will be called a \emph{loopoid}.
\end{definition}
\begin{remarkth}
In a loop, the multiplication is globally defined, so the unity associativity is always satisfied
by properties of the unity element. In this sense, loops are loopoids over one point.
\end{remarkth}
\begin{proposition}
Let $G$ be a loopoid over $M$ with the source and target maps $\za,\zb:G\to M$. Then, for each $u\in M$, the multiplication in $G$ induces on the set
$$G_u=\{ g\in G\,|\, \za(g)=\zb(g)=u\}$$ a loop structure.
\end{proposition}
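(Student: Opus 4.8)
The plan is to verify directly that $(G_u,\bullet,u)$ satisfies the defining properties of a loop, where $\bullet$ denotes the restriction of the partial multiplication of $G$ to $G_u$ and the unit $u\in M$ will play the role of the two-sided identity. Recall that a loop is a quasi-group---a set with a \emph{globally} defined binary operation whose left and right translations are permutations---possessing a two-sided identity. So there are three things to establish: that the multiplication is everywhere defined on $G_u$ (closure), that $u$ is a two-sided unit, and that left and right translation by any $g\in G_u$ is a bijection of $G_u$. Throughout I would freely use that, $G$ being a loopoid, it satisfies unities associativity, so the earlier proposition supplies the composability criterion (\ref{ag1}) and the morphism property (\ref{agc}), as well as the bijectivity of the fibre-translation maps (\ref{aga}).

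First the closure and the identity. Since $\za,\zb$ are projections of $G$ onto the subset $M$, they restrict to the identity on $M$; in particular $\za(u)=\zb(u)=u$, so $u\in G_u$. For $g,h\in G_u$ we have $\zb(g)=u=\za(h)$, hence $(g,h)\in G_2$ by (\ref{ag1}) and the product $gh$ is defined; moreover $\za(gh)=\za(g)=u$ and $\zb(gh)=\zb(h)=u$ by (\ref{agc}), so $gh\in G_u$. This shows $\bullet$ is a total binary operation on $G_u$. Specializing the unit axioms $\za(g)g=g$ and $g\zb(g)=g$ to $g\in G_u$, where $\za(g)=\zb(g)=u$, yields $ug=gu=g$, so $u$ is a two-sided identity for $\bullet$.

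The substantive step is the bijectivity of the translations, and here the point is that the maps supplied by the loopoid structure are defined on whole $\alpha$- or $\beta$-fibres, whereas $G_u=\calf^\alpha(u)\cap\calf^\beta(u)$ is the intersection of two such fibres. Fix $g\in G_u$; since $\za(g)=\zb(g)=u$, the map $l_g:\calf^\alpha(u)\to\calf^\alpha(u)$ of (\ref{aga}) is a bijection of the single fibre $\calf^\alpha(u)$, and I would show it restricts to a bijection of $G_u$. If $h\in G_u\subset\calf^\alpha(u)$, then $gh\in G_u$ by the closure computation above, so $l_g(G_u)\subset G_u$. Conversely, for $k\in G_u$ there is, by surjectivity on the fibre, a unique $h\in\calf^\alpha(u)$ with $gh=k$; applying (\ref{agc}) gives $\zb(h)=\zb(gh)=\zb(k)=u$, so in fact $h\in G_u$. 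Hence $l_g$ is a bijection $G_u\to G_u$, and the symmetric argument using $r_g:\calf^\beta(u)\to\calf^\beta(u)$ together with $\za(h)=\za(hg)$ shows $r_g$ is a bijection of $G_u$ as well. With closure, the two-sided unit, and the bijectivity of both translations established, $G_u$ is a loop. The only place demanding genuine care is this last paragraph: one must confirm that the fibrewise bijections do not carry an element of $G_u$ out of the complementary fibre, which is precisely what the morphism property (\ref{agc}) guarantees.
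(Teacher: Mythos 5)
Your proof is correct and follows essentially the same route as the paper's: establish closure and global definedness of the multiplication on $G_u$ via the composability criterion and the morphism property, note $u$ is the two-sided unit, and obtain bijectivity of translations by restricting the fibrewise bijections $l_g,r_g$ to $G_u$. The paper's own proof is far terser (it treats the restriction step as obvious), so your careful verification that the fibre bijections preserve $G_u$ simply fills in what the paper leaves implicit.
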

\begin{proof}
$G_u$ is clearly closed with respect to the multiplication and this multiplication is globally defined, $G_u\subset G_2$. As $u$ is the only unit in $G_u$, we have only one $\za$/$\zb$-fiber
on which translations act as bijections, thus we deal with a loop.
\end{proof}
\noindent The loop $G_u$ above will be called the \emph{isotropy loop} of $u\in M$.
\begin{example}
Let $X$ be a loop with the unit $e$ and $N$ be a set. On $G=X\ti N\ti N$ we have an obvious structure of a loopoid as a product structure of the loop $X$ and the pair groupoid $N\ti N$ over $M= \{(e,s,s)\,|\, s\in N\}\subset G$. The anchor map is
$$(\za,\zb)(x,s,t)=(e,s,t)$$
and the partial multiplication reads
$$(x,s,t)\bullet(y,t,r)=(xy,s,r)\,.$$
If $X$ is an inverse loop, then $G$ is an inverse loopoid with the inverse $\zi(x,s,t)=(x^{-1},t,s)$. In this example $X$ is the \emph{isotropy loop} $G_u$ at each $u\in M$.
\end{example}

\noindent Note that we can consider slightly weaker objects tha loopoids.
\begin{definition}
A \emph{left loopoid} (resp., \emph{right loopoid}) is a semiloopoid satisfying $G_{2}= \{(g,h)\in G \times G \,|\ \beta (g)=\alpha (h)\}$ and 
such that, for each $g\in G$, the left translation ${l}_{g}$ is a bijection from $\calf ^{\alpha
}(\beta (g))$ onto $\calf ^{\alpha }(\alpha  (g))$ (resp., 
the right translation $r_{g}$ is a bijection from $\calf ^{\beta }(\alpha  (g))$ onto $\calf ^{\beta }(\beta
(g))$.
\end{definition}

\begin{example}
Let us finish with an interesting example of a left loopoid which is not a loopoid and can be regarded as a toy example of the intended \emph{left differential loopoid}. Consider the pair groupoid $\mathcal{G}=\R^2\ti\R^2$
with the standard source and target maps
$\za(u,v)=(u,u)$, $\zb(u,v)=(v,v)$
 and composition $(u,v)(v,z)=(u,z)$.
For a diffeomorphism $\zf:\R\to\R$ being an odd function, $\zf(-x)=-\zf(x)$, define a submanifold
\be\label{GG}G=\left\{\left((a_1,b_1),(a_2,b_2)\right)\in\mathcal{G}: a_1-a_2=\zf(b_1-b_2)\right\}\,.
\ee
It is a semiloopoid, with the source and target maps inherited from $\mathcal{G}$,
 and the partial multiplication
\be\label{pm1}\left((a_1,b_1),(a_2,b_2)\right)\bullet\left((a_2,b_2),(a_3,b_3)\right)=
\left((a_1,b_1),(a_1+\zf(b_3-b_1),b_3)\right)\,.
\ee
Indeed, for fixed $(a_1,b_1)\in\R^2$ the map $(a_3,b_3)\mapsto (a_1+\zf(b_3-b_1),b_3)$ is a diffeomorphism, so the left translations $l_{(a_1,b_1)}$ are smooth immersions. The right translations are smooth immersions trivially.
For $G$ we have $\za(gh)=\za(g)$, but generally, for non-linear $\zf$, we have $\zb(gh)\ne\zb(h)$, so $G$ is not a loopoid. 

It is interesting that $G$ has a left inverse $$\zi_l((a_1,b_1),(a_2,b_2))=((a_2,b_2),(a_1,b_1))\,.$$
Indeed, 
\beas&&\left((a_2,b_2),(a_1,b_1)\right)\bullet\left(\left((a_1,b_1),(a_2,b_2)\right)
\bullet\left((a_2,b_2),(a_3,b_3)\right)\right)\\
&=&\left((a_2,b_2),(a_1,b_1)\right)\bullet\left((a_1,b_1),(a_1+\zf(b_3-b_1),b_3)\right)\\
&=&\left((a_2,b_2),(a_2+\zf(b_3-b_2),b_3)\right)
=\left((a_2,b_2),(a_3,b_3)\right)\,.
\eeas
The last equality follows from the fact that $\left((a_2,b_2),(a_3,b_3)\right)\in G$, so by (\ref{GG}) 
$$a_2-a_3=\phi(b_2-b_3)=-\phi(b_3-b_2)\,.$$
All this implies that $G$ is a left (inverse) loopoid. Note that $G$ is actually a (Lie) groupoid if $\zf$ is linear and that $G$ is obtained as a transversal in the groupoid $\mathcal{G}$ with respect to a projection $\pi:\mathcal{G}\to G$ given by
$$\zp\left((a_1,b_1),(a_2,b_2)\right)=\left((a_1,b_1)(a_1+\zf(b_2-b_1),b_2)\right)\,.
$$
Indeed, the subset of units (the diagonal) belongs to $G$ (as $0=a_1-a_1=\zf(b_1-b_1)$), so $\za(G)=\zb(G)\subset G$. Moreover, if $\left((a_1,b_1),(a_2,b_2)\right),\left((a_2,b_2),(a_3,b_3)\right)\in G$, then
$$\zp\left(((a_1,b_1),(a_2,b_2))\cdot((a_2,b_2),(a_3,b_3))\right)=\zp\left((a_1,b_1),(a_3,b_3)\right)=
\left((a_1,b_1)(a_1+\zf(b_3-b_1),b_3)\right)\,,$$
so we recover the multiplication in $G$.
\end{example}


\small{
\noindent Janusz GRABOWSKI\\ Polish Academy of Sciences\\ Institute of
Mathematics\\ \'Sniadeckich 8, P.O. Box 21, 00-656 Warsaw,
Poland\\Email: jagrab@impan.pl \medskip
\end{document}

\end{document}
D